\newtheorem{thm}{Theorem}[section]
\newtheorem{cor}[thm]{Corollary}
\newtheorem{lem}[thm]{Lemma}
\theoremstyle{definition}
\newtheorem{rem}[thm]{Remark}
\newcommand{\Ext}{\operatorname{Ext}}
\newcommand{\Tor}{\operatorname{Tor}}
\newcommand{\Hom}{\operatorname{Hom}}
\newcommand{\HH}{\operatorname{H}}
\newcommand{\Lie}{\mathsf{Lie}}
\numberwithin{equation}{section}
\begin{document}

\title[The Lie module and its complexity]
{\bf The Lie module and its complexity}

\author{\sc Frederick R. Cohen}

\address
{Department of Mathematics\\University of Rochester\\Hylan
Building\\Rochester, NY~14627, USA}
 \email{cohf@math.rochester.edu}

\author{\sc David J. Hemmer}
\address
{Department of Mathematics\\ University at Buffalo, SUNY \\
244 Mathematics Building\\Buffalo, NY~14260, USA}
\thanks{Research of the second author was supported in part by NSF
grant  DMS-1068783} \email{dhemmer@math.buffalo.edu}

\author{\sc Daniel K. Nakano}
\address
{Department of Mathematics\\ University of Georgia \\
Athens\\ GA~30602, USA}
\thanks{Research of the third author was supported in part by NSF
grant  DMS-1402271} \email{nakano@math.uga.edu}

\date{February 2015}

\subjclass[2000]{Primary 20C30, 55S12, 55P47}

\begin{abstract}

The complexity of a module  is an important homological invariant that measures the polynomial rate of growth of its minimal projective resolution.
For the symmetric group $\Sigma_n$, the Lie module $\Lie (n)$ has attracted a great deal of interest in recent years.
We prove here that the complexity of $\Lie(n)$ in characteristic $p$ is $t$ where $p^t$ is the largest power of $p$ dividing $n$, thus proving a conjecture of Erdmann, Lim and Tan.
The proof uses work of Arone and Kankaanrinta which describes the homology $\HH_\bullet(\Sigma_n, \Lie(n))$ and earlier work of Hemmer and Nakano on complexity for
modules over $\Sigma_n$ that involves restriction to Young subgroups.
\end{abstract}

\maketitle

\section{Introduction}

Let $G$ be a finite group and $k$ be an algebraically closed of characteristic $p$. In 1977, Alperin \cite{AlperinCompl} defined the \emph{complexity} of $M$, denoted $c_{G}(M)$, as the rate of growth of the minimal projective resolution of $M$. It is in some sense a measure of how far $M$ is from being projective; in particular $M$ is projective if and only if $c_{G}(M)=0$. Alperin's definition naturally led to the the theory of
support variety of modules, in addition to, the emphasis on homological and topological methods in representation theory.  With such methods one can compute the
complexity of modules without explicitly describing the minimal resolution. For an introduction to complexity and support varieties we refer to \cite{BensonbookII}.

Let $\Sigma_{n}$ be the symmetric group on $n$-letters. The representation theory and combinatorics of the symmetric group via Young tableaux has been well-studied.
In \cite{HN}, Hemmer and Nakano provided a combinatorial formula for the complexity of Young modules over $\Sigma_{n}$ via the
removal of horizontal $p$-hooks. They also determined the complexity for completely splittable irreducible modules. One of their main tools was the development of
a formula for $c_{\Sigma_{n}}(M)$ via branching to Young subgroups.

The goal of this paper is to compute the complexity of the $k\Sigma_n$ module $\Lie(n)$, which we define next. For any commutative ring $R$ and positive integer $n$, let $\Lie_R(x_1, x_2, \ldots, x_n)$ be the free Lie algebra over $R$ generated by $x_1, x_2, \ldots, x_n$ and let $\Lie_R(n)$ be the submodule spanned by all bracket monomials containing each $x_i$ exactly once.  Then $\Lie_R(n)$ is a module for the symmetric group $\Sigma_n$ acting by permuting the variables.

We will be interested in $\Lie_k(n):=\Lie(n)$. This module arises naturally in topology, for example as the top degree homology of the configuration space of $n$ points in the plane
tensored by the sign representation. In characteristic zero there is a beautiful  description of its complex character in terms of tableaux combinatorics,
see \cite[Chapter 8]{Reutenauerbook} for a thorough treatment. Furthermore,  the representation $\Lie(n)$ is a direct summand of ${\mathbb Q}\Sigma_n$.
In characteristic $p$, very little is known about the module structure of $\Lie(n)$ except in special cases, for example small $n$ or when $p^2 \nmid n$.
Over an arbitrary field $k$, $\Lie(n)$ has dimension $(n-1)!$ and is free over $k\Sigma_{n-1}$.

Erdmann, Lim and Tam \cite{ErdmannLimTan} stated a conjecture for $c_{\Sigma_{n}}(\Lie(n))$. Our strategy in proving this conjecture involves first employing the Hemmer-Nakano
formula for the complexity of $k\Sigma_{n}$-modules via branching to Young subgroups. In the next section, we provide a homology version of this result. This lends itself well to applying the
calculations due to Arone and Kankaanrinta \cite{aroneKank} to give estimates on the rate of growth of the homology groups in the aforementioned complexity formula.

The authors acknowledge the hospitality of the Mathematics Institute of Extended Stay America where a majority of these results were obtained.

\section{Complexity: Interpretations via cohomology and homology}\label{sec:complexityinterp}

Throughout the paper $G$ will denote a finite group, $k$ an algebraically closed field, and $kG$ the group algebra for $G$. All $kG$-modules considered
are finite-dimensional. If $M$ is a $kG$-module then $M^{*}$ will denote the dual $kG$-module.

Let $V_\bullet= \{V_t : t \in \mathbb{N}\}$ be a sequence of finite-dimensional vector spaces. The {\em rate of growth} of $V_\bullet$, denoted $\gamma(V_\bullet)$, is the smallest positive integer $c$ such that $\operatorname{dim} V_t \leq K \cdot t^{c-1}$ for some constant $K$ and for all $t$. For example $\gamma(V_\bullet)=1$ if and only if the dimensions of $V_t$ are uniformly bounded.

We define suspension $\Sigma^{1}V_{\bullet}$ to be $(\Sigma^{1}V_{\bullet})_{t+1}=V_{t}$. In particular, the degree of $\Sigma^{1}V_{t}$ is $t+1$. This corresponds algebraically to the behavior of the topological suspension of a space on the level of homology. One can now easily show that
$\gamma(V_{\bullet})=\gamma(\Sigma^{1}V_{\bullet})$.  Let $c=\gamma(V_{\bullet})$ and $e=\gamma(\Sigma^{1}V_{\bullet})$. Observe that $\dim V_{t} \leq K\cdot t^{c-1}$ so
$\dim (\Sigma^{1}V_{\bullet})_{t}=\dim V_{t-1} \leq  K\cdot (t-1)^{c-1}$, thus $e\leq c$.

We have $\dim (\Sigma^{1}V_{\bullet})_{t} \leq Q\cdot t^{e-1} $ for some positive constant  $Q$. Then
$$\dim V_{t} \leq Q\cdot (t+1)^{e-1} =Q\cdot  t^{e-1} + Q\cdot p(t).$$
where $p(t)$ is a polynomial of degree $e-2$ with strictly positive coefficients. Therefore, there
exists $Q^{\prime}>0$ such that $p(t)\leq Q^{\prime}\cdot t^{e-1}$, and $\dim V_{t}\leq (Q+Q^{\prime})\cdot t^{e-1}$. Consequently,
$c\leq e$, and $c=e$. We have proven the following:

\begin{lem}
\label{lem: suspensionpreservesrog} Let $V_\bullet= \{V_t : t \in \mathbb{N}\}$ be a sequence of finite dimensional vector spaces. Then for any $i \geq 0$ we have:

$$\gamma(V_\bullet)=\gamma(\Sigma^i V_\bullet).$$
\end{lem}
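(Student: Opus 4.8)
The plan is to reduce the general statement to the single-suspension case $\gamma(V_\bullet)=\gamma(\Sigma^1 V_\bullet)$, which has already been established in the paragraph preceding the lemma, and then induct on $i$. Concretely, the argument before the statement shows that for any sequence $W_\bullet$ of finite-dimensional vector spaces one has $\gamma(W_\bullet)=\gamma(\Sigma^1 W_\bullet)$; I would simply invoke this with $W_\bullet = \Sigma^{i-1}V_\bullet$ to get the inductive step.

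First I would handle the base case $i=0$, which is trivial since $\Sigma^0 V_\bullet = V_\bullet$ by convention, so $\gamma(V_\bullet)=\gamma(\Sigma^0 V_\bullet)$ holds tautologically. Next, I would note that $\Sigma^i V_\bullet = \Sigma^1(\Sigma^{i-1}V_\bullet)$, which is immediate from the definition of suspension: shifting all degrees up by $i$ is the same as first shifting by $i-1$ and then by $1$. Then, assuming inductively that $\gamma(V_\bullet)=\gamma(\Sigma^{i-1}V_\bullet)$, I would apply the single-suspension identity to the sequence $\Sigma^{i-1}V_\bullet$ to obtain
\[
\gamma(\Sigma^i V_\bullet)=\gamma(\Sigma^1(\Sigma^{i-1}V_\bullet))=\gamma(\Sigma^{i-1}V_\bullet)=\gamma(V_\bullet),
\]
completing the induction.

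There is essentially no obstacle here: the entire content of the lemma is contained in the $i=1$ case, which the excerpt proves in full before stating the lemma, and the passage from $i=1$ to general $i$ is a routine induction using associativity of the suspension operation. The only point requiring minor care is to make sure that $\Sigma^{i-1}V_\bullet$ is again a sequence of finite-dimensional vector spaces so that the $i=1$ result applies to it; this is clear since suspension merely relabels the indexing and reinserts a zero space in the bottom degree, preserving finite-dimensionality of every term. Thus the proof is a one-line induction once the base case $i=1$ is in hand.
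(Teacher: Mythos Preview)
Your proposal is correct and matches the paper's approach exactly: the paper proves the $i=1$ case in full in the paragraph preceding the lemma and then states the general result without further argument, leaving the routine induction implicit. You have simply made that induction explicit, which is precisely what is intended.
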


Let $M$ be a $kG$-module and let $P_{\bullet}\rightarrow M$ be a minimal projective resolution of $M$. The complexity of $M$, denoted by $c_{G}(M)$, is $\gamma(P_\bullet)$. Since
$kG$ is a self-injective algebra, $c_{G}(M)=0$ if and only if $M$ is a projective $kG$-module.

We begin by following \cite[Section 2.4]{EvensBook}. Suppose $P_\bullet \rightarrow M$ is a minimal projective resolution and $S$ is a simple $kG$ module.
Then the differentials in the complexes defining $\Ext$ and $\Tor$ vanish and one gets:

\begin{equation}
\label{eq: tor special case}
\Tor_n^{kG}(M,S) = P_n \otimes_{kG} S
\end{equation}
and
\begin{equation}
\label{eq: ext special case}
\Ext^n_{kG}(M,S)=\Hom_{kG}(P_n,S).
\end{equation}
Note that in \eqref{eq: tor special case} one considers $P_n$ as a right $kG$-module via the usual $p \cdot g :=g^{-1}p.$ The dimension of $\Hom_{kG}(P_n,S)$ is the number of summands of $P_n$ isomorphic to the projective cover of $S$. Using the usual adjoint associativity between $\Hom$ and tensor product we obtain:

\begin{eqnarray*}
  \dim_k(P_n \otimes_{kG} S) &=& \dim_k\Hom_k(P_n \otimes_{kG} S,k) \\
   &=& \dim_k \Hom_{kG}(P_n, \Hom_k(S,k))\\
   &=& \dim_k \Hom_{kG}(P_n, S^{*}).
\end{eqnarray*}
Therefore, we obtain the following relationship:
\begin{equation}\label{Ext-Tor-relationship}
\dim_k\Ext^n_{kG}(M,S) = \dim_k\Tor^{kG}_n(M,S^*),
\end{equation}
where $S$ is a simple $kG$-module and $M$ is an arbitrary $kG$-module.

We will now consider the case of modules over the symmetric group on $n$-letters, $\Sigma_{n}$.
Let $\lambda=(\lambda_{1},\lambda_{2},\dots,\lambda_{l})\vDash n$ be a composition of $n$. The associated
Young subgroup is $\Sigma_{\lambda}\cong \Sigma_{\lambda_{1}}\times \Sigma_{\lambda_{2}}\times \dots \times
\Sigma_{\lambda_{l}}$.  We apply the results of \cite{HN}, which let one interpret the complexity of a $k\Sigma_n$ module
in terms of its homology or cohomology on restriction to Young subgroups.

\begin{thm}
\label{symmetric-complexity-branch}
Let $M$ be a $k\Sigma_{n}$-module. The following are equivalent.
\begin{itemize}
\item[(a)] $c_{G}(M)$
\item[(b)] $\operatorname{max}_{\lambda\vDash n} \{\gamma(\HH^\bullet(\Sigma_{\lambda},M))\}$
\item[(c)] $\operatorname{max}_{\lambda\vDash n} \{\gamma(\HH_{\bullet}(\Sigma_{\lambda},M))\}$
\end{itemize}
\end{thm}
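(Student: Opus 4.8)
The plan is to take the equality of (a) and (b) as the cohomological branching formula for $c_{\Sigma_n}$ due to Hemmer and Nakano \cite{HN}, and to deduce (c) from it by a formal duality argument resting on \eqref{Ext-Tor-relationship}. The input (a)$=$(b) itself rests on support-variety stratification (Quillen, Avrunin--Scott): $c_{\Sigma_n}(M)$ is detected on elementary abelian $p$-subgroups, each of which acts on $\{1,\dots,n\}$ with $p$-power orbit sizes and is therefore $\Sigma_n$-conjugate into a Young subgroup $\Sigma_\lambda$, and on the Young subgroups that occur the trivial module already sees the whole support variety; I will simply cite this and concentrate on moving from (b) to (c).

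The first step is to record the dimension identity
\[
\dim_k \HH^n(\Sigma_\lambda,M)\;=\;\dim_k \HH_n(\Sigma_\lambda,M^{*})\qquad(\lambda\vDash n,\ n\ge 0).
\]
I would prove this by splicing together four standard identifications: $\HH^n(\Sigma_\lambda,M)=\Ext^n_{k\Sigma_\lambda}(k,M)$; the $\Sigma_\lambda$-equivariant $k$-linear duality isomorphism $\Ext^n_{k\Sigma_\lambda}(k,M)\cong\Ext^n_{k\Sigma_\lambda}(M^{*},k)$; the equality $\dim_k\Ext^n_{k\Sigma_\lambda}(M^{*},k)=\dim_k\Tor^{k\Sigma_\lambda}_n(M^{*},k)$, which is exactly \eqref{Ext-Tor-relationship} applied with the simple module $S=k$; and the symmetry $\Tor^{k\Sigma_\lambda}_n(M^{*},k)\cong\Tor^{k\Sigma_\lambda}_n(k,M^{*})=\HH_n(\Sigma_\lambda,M^{*})$. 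Substituting $M^{*}$ for $M$ then gives $\dim_k\HH_n(\Sigma_\lambda,M)=\dim_k\HH^n(\Sigma_\lambda,M^{*})$, hence $\gamma(\HH_\bullet(\Sigma_\lambda,M))=\gamma(\HH^\bullet(\Sigma_\lambda,M^{*}))$ for every $\lambda$.

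Taking the maximum over $\lambda\vDash n$ and applying the equality (a)$=$(b) to the module $M^{*}$, I get
\[
\max_{\lambda\vDash n}\gamma(\HH_\bullet(\Sigma_\lambda,M))=\max_{\lambda\vDash n}\gamma(\HH^\bullet(\Sigma_\lambda,M^{*}))=c_{\Sigma_n}(M^{*}),
\]
and it remains to observe that $c_{\Sigma_n}(M^{*})=c_{\Sigma_n}(M)$: applying $\Hom_k(-,k)$ to a minimal projective resolution $P_\bullet\to M$ yields a minimal injective resolution of $M^{*}$, which, $k\Sigma_n$ being self-injective, is again a minimal projective resolution of $M^{*}$ with the same dimensions in each degree. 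This gives (a)$=$(c), and (b)$=$(c) follows. Lemma \ref{lem: suspensionpreservesrog} is not needed for the chain above, but it is handy for bookkeeping if one reroutes the $\Ext$--$\Tor$ comparison through Tate cohomology, where a degree shift enters.

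The only genuinely hard point is the imported statement (a)$=$(b). The inequality $\gamma(\HH^\bullet(\Sigma_\lambda,M))\le c_{\Sigma_n}(M)$ for each $\lambda$ is routine, since restricting a minimal projective resolution of $M$ to $\Sigma_\lambda$ and counting dimensions bounds these cohomology groups from above. The reverse inequality $c_{\Sigma_n}(M)\le\max_{\lambda\vDash n}\gamma(\HH^\bullet(\Sigma_\lambda,M))$ is the substance of \cite{HN}; granting it, the move to homology above is formal, the only care being that each duality isomorphism is $\Sigma_\lambda$-equivariant and that rate of growth ignores the finitely many low-degree terms.
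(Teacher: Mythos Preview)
Your proposal is correct and follows essentially the same route as the paper: cite (a)$=$(b) from \cite{HN}, establish the chain $\dim\HH^n(\Sigma_\lambda,M)=\dim\Ext^n(k,M)=\dim\Ext^n(M^{*},k)=\dim\Tor_n(M^{*},k)=\dim\HH_n(\Sigma_\lambda,M^{*})$, and finish by invoking $c(M)=c(M^{*})$. The only cosmetic differences are that the paper cites \cite[Prop.~5.7.3]{BensonbookII} for $c(M)=c(M^{*})$ where you supply a direct self-injectivity argument, and you add background on Quillen stratification for (a)$=$(b) that the paper omits.
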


\begin{proof} The statement that (a) is equivalent to (b) is \cite[Theorem 4.3.1]{HN}. To
prove that (b) is equivalent to (c), first observe that for $n\geq 0$,
\begin{eqnarray*}
\dim \HH^{n}(\Sigma_{\lambda},M) &=& \dim \Ext^{n}_{k\Sigma_{\lambda}}(k,M) \\
                                                        &=& \dim \text{Ext}^{n}_{k\Sigma_{\lambda}}(M^{*},k) \\
                                                        &=& \dim \Tor_{n}^{k\Sigma_{\lambda}}(M^{*},k) \\
                                                        &=& \dim \HH_{n}(\Sigma_{\lambda},M^{*}).
\end{eqnarray*}
In order to complete the proof, use the fact that $c_{\Sigma_{\lambda}}(M)=c_{\Sigma_{\lambda}}(M^{*})$ (cf. \cite[Prop. 5.7.3]{BensonbookII}).
\end{proof}

\section{The computation of $c_{\Sigma_{n}}(\Lie(n))$}

We first reduce the computation of the complexity of the Lie module $\Lie(n)$ to the computation
of $\gamma(\HH_{\bullet}(\Sigma_{p^{r}},\Lie(p^{r})))$ for all $r\leq t$, where $p^{t}$ is the largest power of $p$ dividing $n$.
This will be accomplished by using the work in \cite{aroneKank} in conjunction with Theorem~\ref{symmetric-complexity-branch}(c).

Let $\lambda=(\lambda_{1},\lambda_{2},\dots,\lambda_{l})\vDash n$ and set $\hat{\lambda}=\text{gcd}(\lambda_{1},\lambda_{2},\dots,\lambda_{l})$.
Moreover, let $j(\lambda)$ be the largest integer such that $p^{j(\lambda)}$ divides $\hat{\lambda}$. According to \cite[p. 4]{aroneKank},
\begin{equation}
\HH_{\bullet}(\Sigma_{\lambda_{1}}\times \Sigma_{\lambda_{2}}\times \dots \times \Sigma_{\lambda_{l}},\Lie(n))
\cong \bigoplus_{r=0}^{j(\lambda)} \HH_{\bullet}(\Sigma_{p^{r}},\Lie(p^{r}))^{\oplus C_{r}}
\end{equation}
where $C_{r} \geq 1$ only depends on $p$, $\lambda$ and $n$. In particular the value of the various $C_r$ does not change the rate of growth. We can conclude that
\begin{equation}\label{rate-reduction}
\gamma(\HH_{\bullet}(\Sigma_{\lambda_{1}}\times \Sigma_{\lambda_{2}}\times \dots \times \Sigma_{\lambda_{l}},\Lie(n)))=\text{max}_{0\leq r \leq j(\lambda)}
\gamma(\HH_{\bullet}(\Sigma_{p^{r}},\Lie(p^{r}))).
\end{equation}
Finally, one can apply Theorem~\ref{symmetric-complexity-branch} with (\ref{rate-reduction}) to deduce the following result.

\begin{thm}\label{thm:reduction}  Let $t$ be the largest positive integer such that $p^{t}\mid n$. Then
$$c_{\Sigma_{n}}(\Lie(n))=\operatorname{max}_{0\leq r \leq t} \gamma(\HH_{\bullet}(\Sigma_{p^{r}},\Lie(p^{r}))).$$
\end{thm}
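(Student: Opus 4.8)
The plan is to combine the branching characterization of complexity from Theorem~\ref{symmetric-complexity-branch}(c) with the homology decomposition of Arone--Kankaanrinta. First I would recall that by part (c) of Theorem~\ref{symmetric-complexity-branch},
$$c_{\Sigma_{n}}(\Lie(n)) = \max_{\lambda \vDash n} \gamma\big(\HH_{\bullet}(\Sigma_{\lambda}, \Lie(n))\big),$$
so the task reduces to understanding the rate of growth of each restricted homology group. Here I would be slightly careful about the distinction between compositions and partitions: since $\Sigma_{\lambda}$ depends only on the multiset of parts, it suffices to range over partitions, and the quantity $j(\lambda)$ (the largest $j$ with $p^{j} \mid \gcd(\lambda_{1},\dots,\lambda_{l})$) is well-defined on that multiset.

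Next I would invoke the Arone--Kankaanrinta decomposition (2.?) to rewrite $\HH_{\bullet}(\Sigma_{\lambda},\Lie(n))$ as a direct sum of copies of the building blocks $\HH_{\bullet}(\Sigma_{p^{r}},\Lie(p^{r}))$ for $0 \leq r \leq j(\lambda)$, with multiplicities $C_{r} \geq 1$. Since each $C_{r}$ is a fixed positive integer independent of the homological degree, multiplying a graded vector space by $C_{r}$ does not change its rate of growth, and a finite direct sum has rate of growth equal to the maximum of the summands' rates of growth; this is precisely \eqref{rate-reduction}. Combining with the previous display gives
$$c_{\Sigma_{n}}(\Lie(n)) = \max_{\lambda \vDash n}\ \max_{0 \leq r \leq j(\lambda)} \gamma\big(\HH_{\bullet}(\Sigma_{p^{r}},\Lie(p^{r}))\big).$$
The final step is to observe that, as $\lambda$ ranges over all partitions of $n$, the value $j(\lambda)$ ranges over exactly $\{0,1,\dots,t\}$ where $t$ is the largest integer with $p^{t}\mid n$: the partition $\lambda = (n)$ (or $(p^{t}, n-p^{t})$, etc.) achieves $j(\lambda)=t$ since $p^{t}\mid n$, and no partition can do better because some part $\lambda_{i}$ must not be divisible by $p^{t+1}$ (else $p^{t+1}\mid n$, or more simply the gcd divides $n$). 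Intermediate values $0 \leq r \leq t$ are realized by partitions whose parts have gcd exactly $p^{r}$, e.g. $(p^{r}, p^{r}, \dots)$ padded to sum to $n$. Hence the double maximum collapses to $\max_{0\leq r\leq t}\gamma(\HH_{\bullet}(\Sigma_{p^{r}},\Lie(p^{r})))$, which is the claim.

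The only real subtlety — hardly an obstacle — is the bookkeeping in that last step: one must check both that $j(\lambda)$ cannot exceed $t$ for any partition of $n$, and that every value from $0$ up to $t$ is attained, so that the indexing set in the outer maximum is genuinely $\{0,\dots,t\}$ and not a proper subset. Everything else is a formal consequence of Lemma~\ref{lem: suspensionpreservesrog}-style stability of rate of growth under the operations appearing (finite direct sums, multiplication by a constant), together with the two cited inputs. No delicate estimate is needed here; the genuine homological work is deferred to the subsequent computation of $\gamma(\HH_{\bullet}(\Sigma_{p^{r}},\Lie(p^{r})))$ itself.
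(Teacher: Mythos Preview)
Your argument is correct and matches the paper's own proof, which likewise combines Theorem~\ref{symmetric-complexity-branch}(c) with the Arone--Kankaanrinta decomposition and the observation that the positive constants $C_{r}$ do not affect rates of growth. One small simplification: you need not verify that every intermediate value $j(\lambda)\in\{0,\dots,t\}$ is attained, since the single composition $\lambda=(n)$ already has $j(\lambda)=t$, and its inner maximum $\max_{0\le r\le t}$ dominates the inner maximum for every other $\lambda$.
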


Let
$$M_r=\HH_\bullet(\Sigma_{p^r},\Lie(p^r))= \Tor_\bullet^{k\Sigma_{p^r}}(k,\Lie(p^r)).$$
Arone and Kankaanrinta gives a basis for the $(r+1)$st suspension $\Sigma^{1+r}M_r$
in terms of ``completely inadmissible Dyer-Lashof words of length $r$". Their results are summarized in the next theorem.

\begin{thm}\cite[Thm. 3.2]{aroneKank}
\label{thm:AKmaintheorem}
The following elements constitute a basis for $\Sigma^{1+r}M_r$:

if $p>2$

$$\{\beta^{\epsilon_1}{Q^{s_1}}\cdots \beta^{\epsilon_r}{Q^{s_r}}u \mid s_r \geq 1, s_j >ps_{j+1}-\epsilon_{j+1} \forall 1 \leq j <r\},$$

if p=2

$$\{Q^{s_1} \cdots Q^{s_r}u \mid s_r \geq 1, s_j >2s_{j+1} \forall 1 \leq j <r\}.$$

Here $u$ is of dimension 1, the $Q^{s_j}$s are Dyer-Lashof operations and the $\beta$s are the homology Bocksteins. Thus $Q^s$ increases dimension by $s$ if $p=2$ and by $2s(p-1)$ if $p>2$, and $\beta$ decreases dimension by one.
\end{thm}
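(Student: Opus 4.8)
The plan is to translate the algebraic object $\Sigma^{1+r}M_r=\Sigma^{1+r}\Tor_\bullet^{k\Sigma_{p^r}}(k,\Lie(p^r))$ into the mod-$p$ homology of an explicit spectrum and then read off the basis from the Dyer--Lashof calculus. The starting point is the classical identification of the Lie module with the reduced homology of the partition complex: writing $\Pi_n$ for the proper part of the lattice of set partitions of $\{1,\dots,n\}$, the order complex $|\Pi_n|$ is Cohen--Macaulay, so $\widetilde{H}_\bullet(|\Pi_n|;\Z)$ is free of rank $(n-1)!$ concentrated in degree $n-3$, and as a $k\Sigma_n$-module $\widetilde{H}_{n-3}(|\Pi_n|;k)\cong\Lie(n)\otimes\mathrm{sgn}$. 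Since tensoring with $\mathrm{sgn}$ is an autoequivalence of $k\Sigma_n$-modules, and since passing to $k$-linear duals leaves unchanged all the homology dimensions appearing in Theorem~\ref{symmetric-complexity-branch} (by the computation in Section~\ref{sec:complexityinterp}), it costs nothing, for the purpose of exhibiting a basis degree by degree, to work throughout with $\widetilde{H}_\bullet(|\Pi_{p^r}|)$, its $k$-duals, and its suspensions in place of $\Lie(p^r)$.

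The second step replaces group homology by the homology of a spectrum. By work of Johnson, Ching, Arone--Ching, and Arone--Dwyer, the Goodwillie derivatives $\partial_\bullet(\mathrm{Id})$ of the identity functor on pointed spaces assemble into the cooperad Koszul-dual to the commutative operad, and $\partial_n(\mathrm{Id})$ is, as a $\Sigma_n$-spectrum, the Spanier--Whitehead dual of a suspension of a suitably pointed model of the partition complex $|\Pi_n|$; in particular its $\F_p$-homology is concentrated in a single degree. Hence, for the $n$-th layer $D_n(S^m)=\bigl(\partial_n(\mathrm{Id})\wedge(S^m)^{\wedge n}\bigr)_{h\Sigma_n}$ of the Goodwillie tower of $\mathrm{Id}$, the homotopy-orbit spectral sequence
$$E^2_{s,t}=\HH_s\bigl(\Sigma_n,\,H_t(\partial_n(\mathrm{Id})\wedge(S^m)^{\wedge n};\F_p)\bigr)\ \Longrightarrow\ \widetilde{H}_{s+t}(D_n(S^m);\F_p)$$
is concentrated in a single row, so it collapses with no extension problems. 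Chasing the grading shifts for $m=1$ identifies $\Sigma^{1+r}M_r$ with a suspension of $\widetilde{H}_\bullet(D_{p^r}(S^1);\F_p)$, up to the harmless twists of the first step; under this identification the formal class $u$ of the statement is the image of the fundamental class $\iota_1\in H_1(S^1)$. This is also where the restriction to prime-power arities enters: for $n$ not a power of $p$ the layer $D_n(S^1)$ is mod-$p$ acyclic by a Tate-vanishing/transfer argument, which is exactly what legitimizes the reduction to arities $p^r$ underlying Theorem~\ref{thm:reduction}.

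The third, computational, step is the determination of $\widetilde{H}_\bullet(D_{p^r}(S^m);\F_p)$, due to Arone--Mahowald, which one carries out by induction on $r$. The cooperad structure of $\partial_\bullet(\mathrm{Id})$ produces a filtration of $D_{p^r}(S^m)$ whose associated graded pieces are indexed by trees with $p^r$ leaves; the Tate-vanishing argument shows that mod $p$ only the leading piece survives, namely the regular $p$-ary tree of height $r$, which is the $r$-fold iterate $D_p^{(r)}(S^m)$ of the first nontrivial layer $D_p(Y)=\bigl(\partial_p(\mathrm{Id})\wedge Y^{\wedge p}\bigr)_{h\Sigma_p}$. Since $\partial_p(\mathrm{Id})$ is a (sign-twisted) sphere spectrum placed in degree $1-p$, the homology of $D_p(Y)$ is computed by the classical mod-$p$ homology of the $p$-th extended power: it is spanned by the ``allowable'' Dyer--Lashof classes $\beta^{\epsilon}Q^{s}$ acting on $H_\bullet(Y;\F_p)$ and their iterates. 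Iterating $r$ times and tracking the desuspension by $p-1$ incurred at each application of $D_p$, one finds that ``allowable at every stage'' for a monomial $\beta^{\epsilon_1}Q^{s_1}\cdots\beta^{\epsilon_r}Q^{s_r}u$ becomes precisely the ``completely inadmissible'' system $s_j>ps_{j+1}-\epsilon_{j+1}$ for $1\le j<r$, while instability of the bottom cell $u$ forces $s_r\ge 1$; when $p=2$ the Bocksteins disappear and the system reads $s_j>2s_{j+1}$, $s_r\ge 1$. A final count of suspensions --- the $\Sigma^{1+r}$ is exactly the shift cancelling the $r$ desuspensions coming from the $\partial_p$'s while placing $u$ in degree $1$ --- yields the stated basis.

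The main obstacle is the combination of the second and third steps: pinning down the identification of $\HH_\bullet(\Sigma_{p^r},\Lie(p^r))$ with the mod-$p$ homology of the Goodwillie layer $D_{p^r}(S^1)$ with the correct normalization $\Sigma^{1+r}$, and then controlling the tree filtration of $\partial_{p^r}(\mathrm{Id})$ --- both the Tate-vanishing that annihilates every tree but the regular $p$-ary one, and the delicate bookkeeping by which ``allowable at each stage'' accumulates, through the per-level desuspensions, into the global ``completely inadmissible'' inequalities together with the correct instability bound $s_r\ge 1$ and Bockstein shifts. By comparison, the identification of $\Lie(n)$ with the homology of the partition complex and the collapse of the homotopy-orbit spectral sequence are routine.
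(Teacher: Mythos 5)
The theorem you are attempting to prove is cited in the paper as \cite[Thm.~3.2]{aroneKank}; the present paper invokes it as a black box and supplies no proof of its own, so there is no argument in the text against which your sketch can be compared. What you have written is instead an outline of the actual Arone--Kankaanrinta (and Arone--Mahowald) argument, and as such it is broadly faithful to the route taken in the cited source: identifying $\Lie(p^r)$ with a sign-twist of the top homology of the partition complex, identifying that in turn with the $\F_p$-homology of the Goodwillie derivative $\partial_{p^r}(\mathrm{Id})$, feeding this into the homotopy-orbit spectral sequence for the layer $D_{p^r}(S^1)$ (which collapses because the coefficients are concentrated in one degree), and finally reading off the basis from the Dyer--Lashof calculus of iterated extended powers. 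You have also correctly located where the real work sits and where your outline stops being a proof: (i) the Tate-vanishing argument showing that, in mod-$p$ homology, all strata in the tree filtration of $\partial_{p^r}(\mathrm{Id})$ die except the one indexed by the regular $p$-ary tree, so that $D_{p^r}$ may be replaced by the $r$-fold iterate $D_p^{(r)}$; and (ii) the degree bookkeeping by which stage-by-stage instability for a class born in degree one accumulates into the global inequalities $s_j>ps_{j+1}-\epsilon_{j+1}$ and $s_r\ge 1$. Neither of these is established in your sketch, so while the strategy matches the literature, the proposal is a pointer to \cite{aroneKank} rather than a self-contained proof --- which, to be fair, is exactly how the paper itself treats the statement.
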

As a special case we remark that the basis element $Q^{s_1}Q^{s_2} \cdots Q^{s_r}u$
lies in degree $2(p-1)(s_1+s_2+ \cdots + s_r)$ for p odd and $(s_1+s_2+ \cdots + s_r)$ for $p=2$. 

\begin{rem} The operations in Theorem~\ref{thm:AKmaintheorem} are not required to satisfy the Adem relations.
For example, $Q^4Q^1(u)$ vanishes by the Adem relations but is non-zero in the 
current setting of the homology.
\end{rem}

The results above are now employed to compute $\gamma(\HH_{\bullet}(\Sigma_{p^r},\Lie(p^{r})))$. 

\begin{thm}\label{thm:Liepr-growth}  For all $r\geq0$,  $\gamma(\HH_{\bullet}(\Sigma_{p^r},\Lie(p^{r})))=r$.
\end{thm}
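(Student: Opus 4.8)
The plan is to count, for each homological degree $t$, the dimension of the degree-$t$ piece of $M_r = \HH_\bullet(\Sigma_{p^r},\Lie(p^r))$ using the explicit basis of Theorem~\ref{thm:AKmaintheorem}, and then read off the rate of growth. By Lemma~\ref{lem: suspensionpreservesrog} it suffices to compute $\gamma(\Sigma^{1+r}M_r)$, so I may work directly with the admissibility conditions as stated: for $p$ odd a basis element is $\beta^{\epsilon_1}Q^{s_1}\cdots\beta^{\epsilon_r}Q^{s_r}u$ with $s_r\geq 1$, $\epsilon_j\in\{0,1\}$, and $s_j > p s_{j+1}-\epsilon_{j+1}$ for $1\le j<r$; for $p=2$ there are no Bocksteins and the condition is $s_j > 2 s_{j+1}$. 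The degree of such an element is a fixed affine function of $(s_1,\dots,s_r)$ (and, for $p$ odd, a bounded correction from the $\epsilon_j$, which contributes only a factor $\le 2^r$ and does not affect the rate of growth). So the problem becomes: how many integer tuples $(s_1,\dots,s_r)$ satisfy the strict inequalities $s_r\ge 1$, $s_j > p s_{j+1}$ (up to the harmless $-\epsilon_{j+1}$) and have a prescribed weighted sum $s_1+\cdots+s_r = N$ (equivalently lie in degree $t$)?

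The base case $r=0$ is immediate: $M_0 = \HH_\bullet(\Sigma_1,\Lie(1)) = \Lie(1) = k$ concentrated in a single degree, so $\gamma(M_0)=1=0+1$; wait — more carefully, $\gamma$ of a sequence that is eventually $1$-dimensional is $1$, and the claim is $\gamma(M_r)=r$, so one should double-check the indexing convention for $r=0$. In any case the substance is $r\ge 1$. The key counting step is the following: the constraints $s_j > p s_{j+1}$ with $s_r\ge 1$ force $s_j \ge p^{r-j}$, so the ``free'' part of each coordinate can be written via new variables and one sees that the number of valid tuples with $s_1+\cdots+s_r$ at most some bound $B$ grows like a constant times $B^{r-1}$ — one degree of freedom is killed by fixing the total weight, and the geometric spreading $s_j \gtrsim p^{r-j} s_r$ means that once $s_r$ is chosen (boundedly many relevant choices near the top once $N$ is fixed is the wrong picture — rather: the constraints are ``open'' enough that the solution set is a full-dimensional cone). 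Concretely, substituting $s_j = p s_{j+1} + a_j$ with $a_j\ge 1$ (absorbing the $\epsilon$'s by a shift) and $s_r = a_r \ge 1$ expresses the tuple in terms of $a_1,\dots,a_r\ge 1$ with no further constraint, and the degree becomes a positive linear form $\sum_j c_j a_j$ in the $a_j$ with all $c_j>0$; the number of $(a_1,\dots,a_r)\in\Z_{\ge1}^r$ with $\sum c_j a_j = t$ is, by a standard Ehrhart/partition-function estimate, $\Theta(t^{r-1})$. Hence $\gamma(\Sigma^{1+r}M_r)=r$, and by Lemma~\ref{lem: suspensionpreservesrog}, $\gamma(M_r)=r$.

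The two directions of the estimate are handled separately. For the upper bound, $\#\{(a_j): \sum c_j a_j = t\} \le \#\{(a_j)\in\Z_{\ge1}^r : \sum a_j \le t/\min_j c_j\} = O(t^{r-1})$, since the $a_j$ are positive and $c_j\ge 1$; the Bockstein choices multiply this by at most $2^r$, a constant. For the lower bound, restrict to tuples with $a_r = 1$ and $a_1,\dots,a_{r-1}$ ranging over a fixed-shape simplex scaled by $t$: $\sum_{j<r} c_j a_j = t - c_r$ has $\Theta(t^{r-2})$ solutions — no, that loses a dimension, so instead keep all $r$ variables free and count lattice points on the hyperplane section of the cone $\{\sum c_j a_j = t\}$, which is an $(r-1)$-dimensional polytope of volume $\Theta(t^{r-1})$; its number of lattice points is $\Theta(t^{r-1})$ for $t$ in a suitable residue class (or summing over a bounded window of $t$-values to smooth out residues, which is permissible since $\gamma$ only sees the growth rate of the partial dimensions).

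The main obstacle I anticipate is purely bookkeeping: handling the Bockstein exponents $\epsilon_j$ for $p$ odd cleanly. They perturb both the admissibility inequality ($s_j > ps_{j+1}-\epsilon_{j+1}$) and the degree (by $-\epsilon_j$), so one must check that neither perturbation changes the polynomial growth rate — which is clear since each $\epsilon_j\in\{0,1\}$ contributes an additive $O(1)$ to the degree and relaxes each inequality by at most $1$, changing counts by at most bounded multiplicative and additive factors. The genuinely essential point — that the Arone--Kankaanrinta basis, free of Adem relations as the Remark emphasizes, is parametrized by an honest $r$-dimensional lattice cone and hence has Hilbert series with a pole of order exactly $r$ at $1$, i.e. rate of growth exactly $r$ — is robust, so I expect the proof to be short once the cone description is set up.
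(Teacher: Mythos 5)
Your proof is correct, and it takes a genuinely cleaner route than the paper's. The paper proves the two bounds by separate devices: for the upper bound it simply forgets the admissibility constraints and overcounts by all compositions of $m$ into $r$ parts (respectively, monomials in $\Lambda(e_1,\dots,e_r)\otimes k[x_1,\dots,x_r]$ for $p$ odd), which already has rate of growth $r$; for the lower bound it writes down, by hand, $x^{r-1}$ admissible tuples $(s_1,\dots,s_r)$ all contributing to a single degree $i=Cx$ — which is really an explicit instance of your parametrization with $a_2,\dots,a_r$ ranging over $\{1,\dots,x\}$ and $a_1$ solved for. Your substitution $s_j = ps_{j+1}+a_j$, $a_j\ge1$ (with the harmless $\epsilon$-shifts absorbed), turns the admissible set into the full cone $\Z_{\ge1}^r$ with degree a positive integral linear form $\sum c_j a_j$ whose coefficients include $1$ (since $c_r=1$), so the count in degree $t$ is $\Theta(t^{r-1})$ by the standard Schur/Ehrhart asymptotic — both bounds fall out of one computation, and the ``Hilbert series with pole of order $r$'' picture is made transparent. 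The trade-off is that the paper's argument is entirely elementary and self-contained, whereas yours invokes lattice-point asymptotics; both are valid. Two minor notes: your $r=0$ worry resolves itself, since $\HH_\bullet(\Sigma_1,\Lie(1))$ is supported in a single degree and hence finitely supported, giving $\gamma=0$ as required (the convention, parallel to $c_G(M)=0$ iff $M$ projective, is that $\gamma=0$ for eventually-zero sequences); and the residue-class concern in your lower bound is likewise moot because $\gcd(c_1,\dots,c_r)=1$ forces the partition function to be eventually positive and $\Theta(t^{r-1})$ for all large $t$, not just a subprogression.
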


\begin{proof} As observed in Lemma~\ref{lem: suspensionpreservesrog}, the rate of growth does not change by taking suspensions so it
suffices to look at the rate of growth of $\Sigma^{1+r}M_r$ where a basis is given in Theorem~\ref{thm:AKmaintheorem}.

We first prove that $\gamma(\HH_{\bullet}(\Sigma_{p^r},\Lie(p^{r})))\leq r$. We do this by embedding the basis of $\HH_{m}(\Sigma_{p^r},\Lie(p^{r}))$, for each $m$, in a sequence of larger vector spaces which have rate of growth $r$.

For $p=2$, using Theorem~\ref{thm:AKmaintheorem}, we
see that the number of basis elements is at most the number of monomials of degree $m$ in the various $Q$s. This is bounded by the the number of compositions of $m$ of the form
$\mu=(s_{1},s_{2},\dots,s_{r})\vDash m$. The number of such compositions has rate of growth $r$, because if coincides with the Krull dimension of the polynomial ring in
$r$ variables. 

For $p$-odd the number of basis elements in degree $m$ is bounded by the number of monomials of degree $m$ in the ring:
\begin{equation}
\label{eq: polytensorext}
\Lambda(e_1, \ldots, e_r) \otimes k[x_1, x_2, \ldots, x_r].
\end{equation}
The exterior algebra generators in \eqref{eq: polytensorext} have degree -1 and correspond to the Bocksteins while the polynomial generators have degree $2(p-1)$ and correspond to the $Q$'s. It is clear that taking the tensor product with the finite dimensional algebra $\Lambda(e_1, \ldots, e_r)$ does not change the rate of growth, nor does shifting the generators $x_i$ from degree 1 to degree $2(p-1)$. So the rate of growth is again bounded above by $r$, the Krull dimension of the polynomial ring.
Therefore, in the $p$-odd case, one also has $\gamma(\HH_{\bullet}(\Sigma_{p^r},\Lie(p^{r})))\leq r$.

Next we show that $\gamma(\HH_{\bullet}(\Sigma_{p^r},\Lie(p^{r})))\geq r$ by constructing a sufficiently large number of basis vectors in each dimension. Fix $p$ and $r$ and let $x \in {\mathbb N}$. Define:
    \begin{equation}
        \label{eq: def of i}
            i=2(p-1)(p^{r-1}+2p^{r-2} +3p^{r-3} + \cdots + (r-1)p +r-1)x.
    \end{equation}

One can describe a subset of the basis for $\HH_i(\Sigma_{p^r}, \Lie(p^r))$ which has $x^{r-1}$ elements. Since $i=Cx$ where $C$ is a constant, depending only on $p$ and $r$, this will prove that $\gamma(\HH_\bullet(\Sigma_{p^r}, \Lie(p^r)) \geq r$ as desired.

We proceed by constructing $x^{r-1}$ distinct basis elements which all contribute to degree $i$. We do not use any of the basis elements containing Bocksteins, so the construction is the same for $p=2$ and $p$ odd. In either case the $s_j$'s must satisfy $s_r \geq 1$ and $s_j >ps_{j+1}$ for $1 \leq j <r$. We choose $s_1, s_2, \ldots, s_r$ as follows. First choose $s_r$ so that:
$$1 \leq s_r \leq x.$$ Now we need $s_{r-1}>ps_r$ so choose $s_{r-1}$ such that
$$px+1 \leq s_{r-1} \leq px+x.$$ Proceed in this way for $s_{r-2}, s_{r-3}, \ldots, s_2$:

$$\begin{array}{rcl}
 p^2x+px+1 \leq& s_{r-2}&\leq  p^2x+px+x\\
 p^3x+p^2x+px+1 \leq& s_{r-3}& \leq p^3x+p^2x+px+x\\
 &\vdots&\\
 p^{r-2}x+p^{r-3}x + \cdots + px +1 \leq & s_2\,\,\, &\leq  p^{r-2}x+p^{r-3}x + \cdots + px +x
\end{array}$$

Observe that there were $x$ choices for each of $s_r, s_{r-1}, \ldots, s_2$. Finally choose:

\begin{multline}
\label{eq: defining s1}
s_1:=(p^{r-1}x+p^{r-2}x + \cdots +px+x) + (x-s_r)+ (px+x-s_{r-1})\\ + (p^2x+px+x -s_{r-2}) +\cdots + (p^{r-2}x+p^{r-3}x + \cdots + px+x -s_2).
\end{multline}

Observe that $s_1 >ps_2$ so $\{s_1, s_2, \ldots, s_r\}$ is an allowable sequence. From \eqref{eq: defining s1} it is clear that:

\begin{equation}
\label{eq: sum of si}
s_1+s_2+ \cdots + s_r=(p^{r-1}+2p^{r-2} + 3p^{r-3} + \cdots +(r-1)p+(r-1))x.
\end{equation}

Since there were $x$ choices for each of $s_2, s_3, s_4, \ldots, s_r$ and then $s_1$ is determined, so we have $x^{r-1}$ distinct basis elements. From \eqref{eq: sum of si} and
the remark after Theorem~\ref{thm:AKmaintheorem}, for $p$ odd these basis elements all occur in degree $i$ and for $p=2$ they occur in degree $i/2$. In either case $i$ is just a linear function of $x$ and the dimension of $\HH_i$ is at least $x^{r-1}$, giving the result.
\end{proof}

By combining Theorems ~\ref{thm:reduction} and \ref{thm:Liepr-growth}, the complexity of $\Lie(n)$ can be computed for any $n$.
This proves the Erdmann-Lim-Tan Conjecture \cite{ErdmannLimTan}.

\begin{cor}For all $n \in \mathbb{N}, c_{\Sigma_n}(\Lie(n)) =t$ where $p^t \mid n$ and $p^{t+1} \nmid n$.
\end{cor}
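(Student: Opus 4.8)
The plan is to combine the two main structural theorems already established, Theorem~\ref{thm:reduction} and Theorem~\ref{thm:Liepr-growth}, and do the bookkeeping to get the exact value $t$. By Theorem~\ref{thm:reduction} we have
$$c_{\Sigma_n}(\Lie(n)) = \operatorname{max}_{0 \leq r \leq t} \gamma(\HH_\bullet(\Sigma_{p^r}, \Lie(p^r))),$$
where $t$ is the largest integer with $p^t \mid n$. Theorem~\ref{thm:Liepr-growth} evaluates each term in the maximum: $\gamma(\HH_\bullet(\Sigma_{p^r}, \Lie(p^r))) = r$ for every $r \geq 0$. Substituting, the maximum over $0 \leq r \leq t$ of the quantities $r$ is plainly $t$, so $c_{\Sigma_n}(\Lie(n)) = t$. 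This is exactly the assertion of the corollary, with $t$ characterized by $p^t \mid n$ and $p^{t+1} \nmid n$.

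The two steps I would carry out, in order, are: first cite Theorem~\ref{thm:reduction} to reduce to the prime-power case; second cite Theorem~\ref{thm:Liepr-growth} to evaluate each $\gamma(\HH_\bullet(\Sigma_{p^r}, \Lie(p^r)))$ and take the maximum. The only thing worth a sentence of care is the edge case: if $p \nmid n$ then $t = 0$, and the formula correctly gives complexity $0$, consistent with the fact that in that situation $\Lie(n)$ is projective (indeed $p \nmid n = (n) \cdot 1$ forces $p \nmid (n-1)!$ is false in general, but the homological statement $\gamma(\HH_\bullet(\Sigma_{1}, \Lie(1))) = 0$ is immediate since $\Sigma_1$ is trivial). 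No genuine obstacle remains here: all the analytic and topological work — the Arone--Kankaanrinta basis, the rate-of-growth estimates via Krull dimension of polynomial rings, the branching formula of Hemmer--Nakano — has already been absorbed into Theorems~\ref{symmetric-complexity-branch}, \ref{thm:reduction}, and \ref{thm:Liepr-growth}. The corollary is a one-line deduction; the entire content is the statement that this deduction settles the Erdmann--Lim--Tan conjecture.

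Since the proof is so short, I would simply write: ``By Theorem~\ref{thm:reduction}, $c_{\Sigma_n}(\Lie(n)) = \operatorname{max}_{0 \leq r \leq t} \gamma(\HH_\bullet(\Sigma_{p^r}, \Lie(p^r)))$, and by Theorem~\ref{thm:Liepr-growth} this equals $\operatorname{max}_{0 \leq r \leq t} r = t$.'' The hard part, such as it is, was entirely front-loaded into proving Theorem~\ref{thm:Liepr-growth}; here there is nothing left to do but assemble the pieces.
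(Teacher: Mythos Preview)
Your proposal is correct and is exactly the paper's own argument: the corollary is deduced in one line by combining Theorem~\ref{thm:reduction} with Theorem~\ref{thm:Liepr-growth} and observing that $\max_{0\le r\le t} r = t$. The extra remarks about the edge case $t=0$ are harmless but unnecessary, and the parenthetical about $(n-1)!$ is muddled and should be dropped.
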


\end{document}